\theoremstyle{plain}
\newtheorem{theorem}{Th\'eor\`eme}
\newtheorem{theoreme}[theorem]{Th\'eor\`eme}
\theoremstyle{definition}
\newtheorem{definition}{D\'efinition}
\theoremstyle{remark}
\theoremstyle{plain}
\newtheorem{thmsec}{Th\'eor\`eme}[section]
\newtheorem{pro}[thmsec]{Proposition}
\newtheorem{lem}[thmsec]{Lemme}
\theoremstyle{definition}
\theoremstyle{remark}
\def\og{\leavevmode\raise.3ex\hbox{$\scriptscriptstyle\langle\!\langle$~}}
\def\fg{\leavevmode\raise.3ex\hbox{~$\!\scriptscriptstyle\,\rangle\!\rangle$}}
\numberwithin{equation}{section}       
\newcommand{\N}{\mathbb{N}}
\newcommand{\C}{\mathbb{C}}
\newcommand{\pp}{\mathbb{P}^{2}_{\mathbb{C}}}
\newcommand{\pd}{\mathbb{\check{P}}^{2}_{\mathbb{C}}}
\newcommand\pgcd{\mathrm{pgcd}}
\newcommand\Sing{\mathrm{Sing}}
\newcommand\Tang{\mathrm{Tang}}
\newcommand\Leg{\mathrm{Leg}}
\newcommand\radH{\Sigma_{\mathcal{H}}^{\mathrm{rad}}}
\newcommand\radHd{\check{\Sigma}_{\mathcal{H}}^{\mathrm{rad}}}
\newcommand\F{\mathcal{F}}
\newcommand\Hcal{\mathcal{H}}
\newcommand\Ccal{\mathcal{C}}
\newcommand\pref{\mathscr{F}}
\newcommand\preh{\mathscr{H}}
\newcommand\W{\mathcal{W}}
\begin{document}
\title[Platitude des tissus duaux de certains pré-feuilletages convexes]{Platitude des tissus duaux de certains pré-feuilletages convexes du plan projectif complexe}
\date{\today}

\author{Samir \textsc{Bedrouni}}

\address{Facult\'e de Math\'ematiques, USTHB, BP $32$, El-Alia, $16111$ Bab-Ezzouar, Alger, Alg\'erie}
\email{sbedrouni@usthb.dz}

\keywords{pré-feuilletage, pré-feuilletage convexe, pré-feuilletage homogène, tissu, tissu dual, platitude}

\maketitle{}

\begin{altabstract}\selectlanguage{english}
A holomorphic pre-foliation $\mathscr{F}=\mathcal{C}\boxtimes\mathcal{F}$ on $\mathbb{P}^{2}_{\mathbb{C}}$ is the data of a reduced complex projective curve $\mathcal{C}$ of $\mathbb{P}^{2}_{\mathbb{C}}$ and a holomorphic foliation $\mathcal{F}$ on $\mathbb{P}^{2}_{\mathbb{C}}$. When the foliation $\mathcal{F}$ is convex and the curve $\mathcal{C}$ is invariant by $\mathcal{F}$, we speak of convex pre-foliation. In a previous paper, we showed that if a foliation $\mathcal{F}$ on $\mathbb{P}^{2}_{\mathbb{C}}$ is reduced convex or homogeneous convex and if $\mathcal{C}$ is an invariant line of $\mathcal{F}$, then the dual web of the convex pre-foliation $\mathcal{C}\boxtimes\mathcal{F}$~is~flat. In~this~paper,~we propose to extend this result to the case of a curve $\mathcal{C}$ consisting of several invariant lines.

\noindent{\it 2010 Mathematics Subject Classification. --- 14C21, 32S65, 53A60.}
\end{altabstract}

\selectlanguage{french}
\begin{abstract}
Un pré-feuilletage holomorphe $\mathscr{F}=\mathcal{C}\boxtimes\mathcal{F}$ sur $\mathbb{P}^{2}_{\mathbb{C}}$ est la donnée d'une courbe projective complexe réduite $\mathcal{C}$ de $\mathbb{P}^{2}_{\mathbb{C}}$ et d'un feuilletage holomorphe $\mathcal{F}$ sur $\mathbb{P}^{2}_{\mathbb{C}}.$ Lorsque le feuilletage $\mathcal{F}$ est convexe et que la courbe $\mathcal{C}$ est invariante par $\mathcal{F}$, on parle de pré-feuilletage convexe. Dans un article précédent, nous avons montré que si un feuilletage $\mathcal{F}$ sur $\mathbb{P}^{2}_{\mathbb{C}}$ est convexe réduit ou homogène convexe et si $\mathcal{C}$ est une droite invariante de $\mathcal{F}$, alors le tissu dual du pré-feuilletage convexe $\mathcal{C}\boxtimes\mathcal{F}$ est plat. Nous nous proposons ici d'étendre ce résultat au cas d'une courbe $\mathcal{C}$ composée de plusieurs droites invariantes.
\noindent{\it Classification math\'ematique par sujets (2010). --- 14C21, 32S65, 53A60.}
\end{abstract}

\section{Introduction}
\bigskip

\noindent Cet article poursuit l'étude, commencée dans~\cite{Bed23arxiv}, de la platitude des tissus duaux des pré-feuilletages du plan projectif complexe. Pour les définitions et notations concernant les feuilletages et les tissus de $\pp$, nous renvoyons à~\cite[Sections~1~et~2]{BM18Bull}.

\subsection{Pré-feuilletages convexes du plan projectif complexe}

\noindent Commençons par rappeler la définition d'un pré-feuilletage sur $\pp$.
\begin{definition}[\cite{Bed23arxiv}]
Soient $0\leq k\leq d$ des entiers. Un {\sl pré-feuilletage holomorphe $\mathscr{F}$ sur $\pp$ de co-degré $k$ et de degré $d$}, ou simplement {\sl de type $(k,d)$}, est la donnée d'une courbe projective complexe réduite $\mathcal{C}\subset\pp$ de degré $k$ et d'un feuilletage holomorphe $\F$ sur $\pp$ de degré $d-k.$ On note $\mathscr{F}=\mathcal{C}\boxtimes\F.$ On dit que $\mathcal{F}$ (resp.~$\mathcal{C}$) est le {\sl feuilletage associé} (resp. la {\sl courbe associée}) à $\mathscr{F}.$
\end{definition}

\noindent Les pré-feuilletages de type $(0,d)$ sont précisément les feuilletages de degré $d$ sur $\pp.$
\smallskip

\noindent Rappelons (\emph{voir}~\cite{MP13}) qu'un feuilletage holomorphe sur $\pp$ est dit \textsl{convexe} si ses feuilles qui~ne~sont pas des droites n'ont pas de points d'inflexion. Notons (\emph{voir} \cite{Per01}) que si $\F$ est un feuilletage de degré $d\geq1$ sur $\pp,$ alors $\F$ ne peut avoir plus de $3d$ droites invariantes (distinctes). En outre, si cette borne est atteinte, alors $\F$ est nécessairement convexe; dans ce cas $\F$ est dit \textsl{convexe réduit}. \smallskip

\noindent Dans cet article, nous nous intéressons à une classe particulière de pré-feuilletages, à savoir:
\begin{definition}[\cite{Bed23arxiv}]
Soit $\pref=\mathcal{C}\boxtimes\F$ un pré-feuilletage sur $\pp.$ On dit que $\pref$ est \textsl{convexe} (resp. \textsl{convexe réduit}) si le feuilletage $\F$ est convexe (resp. convexe réduit) et si de plus la courbe $\mathcal{C}$ est invariante par $\F.$
\end{definition}

\noindent Il s'agit d'une extension naturelle des notions de convexité et de convexité réduite des feuilletages.

\subsection{Tissus}

Un $d$-tissu (régulier) $\W$ de $(\mathbb{C}^2,0)$ est la donnée d'une famille $\{\F_1,\F_2,\ldots,\F_d\}$ de feuilletages holomorphes réguliers de $(\mathbb{C}^2,0)$ deux à deux transverses en l'origine. On note $\mathcal{W}=\mathcal{F}_{1}\boxtimes\cdots\boxtimes\mathcal{F}_{d}.$

\noindent Un $d$-tissu (global) sur une surface complexe $M$ est donné dans une carte locale $(x,y)$ par une équation différentielle implicite $F(x,y,y')=0$, où $F(x,y,p)=\sum_{i=0}^{d}a_{i}(x,y)p^{d-i}$ est un polynôme (réduit) en $p$ de degré~$d$, à~coefficients $a_i$ analytiques, avec $a_0$ non identiquement nul. Au voisinage de tout point $z_{0}=(x_{0},y_{0})$ tel que $a_{0}(x_{0},y_{0})\Delta(x_{0},y_{0})\neq 0$, où $\Delta(x,y)$ est le $p$-discriminant de $F$, les courbes intégrales de cette équation définissent un $d$-tissu régulier de $(\mathbb{C}^2,z_{0}).$

\noindent En vertu de~\cite{MP13}, à tout pré-feuilletage $\mathscr{F}=\mathcal{C}\boxtimes\F$ de degré $d\geq1$ et de co-degré $k<d$ sur $\pp$ on~peut associer un $d$-tissu de degré $1$ sur le plan projectif dual $\pd,$ appelé {\sl transformée de \textsc{Legendre}} (ou~tissu dual) de $\pref$ et noté $\Leg\pref$\label{not:Leg-pref}; si $\pref$ est donné dans une carte affine $(x,y)$ de $\pp$ par une $1$-forme $\omega=f(x,y)\left(A(x,y)\mathrm{d}x+B(x,y)\mathrm{d}y\right)$, où $f,A,B\in\mathbb{C}[x,y],$ $\mathrm{pgcd}(A,B)=1,$ alors, dans la carte affine $(p,q)$ de~$\pd$ correspondant à la droite $\{y=px-q\}\subset\pp,$ $\Leg\pref$ est donné par l'équation différentielle implicite
\[
\check{F}(p,q,x):=f(x,px-q)\left(A(x,px-q)+pB(x,px-q)\right)=0, \qquad \text{avec} \qquad x=\frac{\mathrm{d}q}{\mathrm{d}p}.
\]
Lorsque $k\geq1$, $\Leg\pref$ se décompose en $\Leg\pref=\Leg\mathcal{C}\boxtimes\Leg\F$ où $\Leg\mathcal{C}$ est le $k$-tissu algébrique de $\pd$ défini par l'équation $f(x,px-q)=0$ et $\Leg\F$ est le $(d-k)$-tissu irréductible de degré $1$ de $\pd$ décrit par $A(x,px-q)+pB(x,px-q)=0.$

\subsection{Courbure et platitude}

Rappelons brièvement la définition de la courbure d'un~$d$-tissu~avec~$d\geq3.$ Supposons d'abord que $d=3$ et considérons le cas d'un germe de $3$-tissu $\W$ de $(\mathbb{C}^{2},0)$~complètement~décomposable, $\W=\F_1\boxtimes\F_2\boxtimes\F_3.$ Pour $i=1,2,3$, soit $\omega_{i}$ une $1$-forme à singularité isolée en $0$ définissant le~feuilletage~$\mathcal{F}_{i}.$ Sans perdre de généralité, on peut supposer que les $1$-formes $\omega_i$ vérifient $\omega_1+\omega_2+\omega_3=0.$ On montre qu'il existe une $1$-forme méromorphe $\eta(\W)$, bien définie à l'addition près d'une $1$-forme fermée logarithmique $\dfrac{\mathrm{d}g}{g}$ avec $g\in\mathcal{O}^*(\mathbb{C}^{2},0)$, telle que $\mathrm{d}\omega_i=\eta(\W)\wedge\omega_i$ pour $i=1,2,3.$ On définit alors la courbure de $\W$ comme étant la $2$-forme $K(\W)=\mathrm{d}\,\eta(\W).$

\noindent Si maintenant $\W$ est un $d$-tissu complètement décomposable, $\mathcal{W}=\mathcal{F}_{1}\boxtimes\cdots\boxtimes\mathcal{F}_{d}$ avec $d>3$, on définit la courbure $K(\W)$ de $\W$ comme étant la somme des courbures des sous-$3$-tissus de $\W$.

\noindent On peut vérifier que $K(\mathcal{W})$ est une $2$-forme méromorphe à pôles le long du discriminant $\Delta(\mathcal{W})$ de $\mathcal{W},$ canoniquement associée à $\mathcal{W}.$

\noindent Si enfin $\mathcal{W}$ est un $d$-tissu sur une surface complexe $M$ (non forcément complètement décomposable), alors on peut le transformer en un $d$-tissu complètement décomposable au moyen d'un revêtement galoisien ramifié. L'invariance de la courbure de ce nouveau tissu par l'action du groupe de \textsc{Galois} permet de la redescendre en une $2$-forme $K(\W)$ méromorphe globale sur $M,$ à pôles le long du discriminant de $\mathcal{W},$ \emph{voir}~\cite{MP13}.

\noindent Un tissu de courbure nulle est dit plat. Lorsque $M=\pp$ la platitude d'un tissu $\W$ sur $\pp$ se caractérise par l'holomorphie de sa courbure $K(\W)$ le long des points génériques de $\Delta(\W)$.

\subsection{Résultats}

Dans~\cite[Théorème~E]{Bed23arxiv} nous avons montré que le tissu dual d'un pré-feuilletage $\pref=\mathcal{C}\boxtimes\F$ convexe réduit sur $\pp$ de co-degré $1$ ({\it i.e.} dont la courbe $\mathcal{C}$ est une droite invariante de $\F$) est plat. Le~théorème~suivant, démontré au~\S\ref{sec:preuve-theoreme-1}, étend ce résultat au cas de plusieurs droites invariantes.
\begin{theoreme}\label{theoreme:C-invariante-convexe-reduit-plat}
{\sl Soit $\pref=\mathcal{C}\boxtimes\F$ un pré-feuilletage convexe réduit sur $\pp$, dont la courbe associée $\mathcal{C}$ est formée de droites invariantes de $\F$. Alors le tissu $\Leg\pref$ est plat.}
\end{theoreme}

\noindent Suivant~\cite[Définition~B]{Bed23arxiv}, un pré-feuilletage sur $\pp$ est dit {\sl homogène} s'il existe une carte affine $(x,y)$ de $\mathbb{P}^{2}_{\mathbb{C}}$ dans laquelle il est invariant sous l'action du groupe des homothéties complexes $(x,y)\longmapsto \lambda(x,y)$, $\lambda\in \mathbb{C}^{*}.$ Un~pré-feuilletage homogène $\preh$ de type $(k,d)$ sur $\pp$ est alors de la forme $\preh=\mathcal{C}\boxtimes\mathcal{H}$, où $\mathcal{H}$ est un feuilletage homogène de degré $d-k$ sur $\pp$ et où la courbe $\mathcal{C}$ est constituée soit de $k$ droites passant par l'origine $O$, soit de la droite $L_\infty$ et de $(k-1)$ droites passant par $O.$ Le~Corollaire~D~de~\cite{Bed23arxiv} affirme que si $\preh$ est convexe de co-degré $k=1$, alors le $d$-tissu $\Leg\preh$ est plat. Le théorème suivant, prouvé au~\S\ref{sec:preuve-theoreme-2}, généralise~ce~résultat au cas d'un pré-feuilletage homogène convexe quelconque.

\begin{theoreme}\label{theoreme:leg-pre-feuilletage-homogene-convexe-plat}
{\sl Le tissu dual d'un pré-feuilletage homogène convexe sur $\pp$ est plat.}
\end{theoreme}


\section{Preuve du Théorème~\ref{theoreme:C-invariante-convexe-reduit-plat}}\label{sec:preuve-theoreme-1}
\bigskip

\noindent La démonstration de ce théorème nécessite quelques résultats intermédiaires.

\begin{lem}\label{lem:K-W-W-prime}
{\sl Soient $\W=\F_1\boxtimes\cdots\boxtimes\F_n$ un $n$-tissu complètement décomposable et $\W'$ un autre tissu. Alors
\begin{align*}
K(\W\boxtimes\W')=K(\W)-(n-2)\sum_{i=1}^{n}K(\F_i\boxtimes\W')+\sum_{1\leq i<j\leq n}K(\F_i\boxtimes\F_j\boxtimes\W')+\binom{n-1}{2}K(\W').
\end{align*}
}
\end{lem}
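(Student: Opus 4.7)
\noindent Notre approche est purement combinatoire. Quitte � passer � un rev�tement galoisien ramifi� et � redescendre la courbure par invariance sous l'action du groupe de \textsc{Galois}, on peut se ramener au cas o� $\W'$ est �galement compl�tement d�composable, $\W'=\F_1'\boxtimes\cdots\boxtimes\F_m'$; le tissu $\W\boxtimes\W'$ poss�de alors $n+m$ facteurs et sa courbure est, par d�finition, la somme des courbures de tous ses sous-$3$-tissus.

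\noindent Premi�re �tape: classer les sous-$3$-tissus de $\W\boxtimes\W'$ selon le nombre $s\in\{0,1,2,3\}$ de leurs facteurs provenant de $\W$. Ceci conduirait � l'identit�
\begin{align*}
K(\W\boxtimes\W')=K(\W)+\sum_{i<j,\,\ell}K(\F_i\boxtimes\F_j\boxtimes\F_\ell')+\sum_{i,\,\ell<k}K(\F_i\boxtimes\F_\ell'\boxtimes\F_k')+K(\W').
\end{align*}

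\noindent Deuxi�me �tape: appliquer la m�me d�finition aux tissus interm�diaires. On obtiendrait $K(\F_i\boxtimes\W')=K(\W')+\sum_{\ell<k}K(\F_i\boxtimes\F_\ell'\boxtimes\F_k')$ ainsi que
\begin{align*}
K(\F_i\boxtimes\F_j\boxtimes\W')=K(\W')+\sum_{\ell}K(\F_i\boxtimes\F_j\boxtimes\F_\ell')+\sum_{\ell<k}\bigl[K(\F_i\boxtimes\F_\ell'\boxtimes\F_k')+K(\F_j\boxtimes\F_\ell'\boxtimes\F_k')\bigr].
\end{align*}

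\noindent Troisi�me �tape: injecter ces expressions dans le membre de droite de la formule � d�montrer, en observant que chaque indice $i$ appara�t dans exactement $(n-1)$ paires $\{i,j\}$, puis comparer les coefficients. Ceux de $K(\W)$ et des deux sommes triples sont imm�diatement �gaux � $1$ (celui de la somme index�e par $(i,\ell<k)$ vaut $-(n-2)+(n-1)=1$). Le seul point moins direct sera le coefficient de $K(\W')$, qui vaut $-n(n-2)+\binom{n}{2}+\binom{n-1}{2}$; un calcul �l�mentaire le ram�nerait � $1$. La preuve ne pr�sente donc aucun obstacle conceptuel: c'est un pur exercice de comptabilisation; les cas d�g�n�r�s $m\leq 2$ ou $n\leq 2$ se traitent avec la convention que la courbure d'un tissu d'au plus deux feuilletages est nulle.
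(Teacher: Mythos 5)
Votre preuve est correcte et suit essentiellement la m\^eme d\'emarche que celle de l'article: d\'ecomposition locale de $\W'$, classement des sous-$3$-tissus selon le nombre de facteurs issus de $\W$, puis comptage des coefficients (la seule diff\'erence, inessentielle, est que vous v\'erifiez la formule en substituant dans le membre de droite, tandis que l'article la d\'erive en exprimant les sommes triples \`a partir des tissus interm\'ediaires). Vos calculs de coefficients, en particulier $-(n-2)+(n-1)=1$ et $-n(n-2)+\binom{n}{2}+\binom{n-1}{2}=1$, sont exacts.
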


\begin{proof}[\sl D\'emonstration]
Si $\W'$ est un $d$-tissu, alors on peut écrire localement $\W'=\F^{'}_{1}\boxtimes\cdots\boxtimes\F^{'}_{d}$ et on a
\begin{align*}
&\hspace{-0.5cm}K(\W\boxtimes\W')=K(\W)+\sum_{k=1}^{d}\sum_{1\leq i<j\leq n}K(\F_i\boxtimes\F_j\boxtimes\F^{'}_{k})+\sum_{i=1}^{n}\sum_{1\leq k<k'\leq d}K(\F_i\boxtimes\F^{'}_{k}\boxtimes\F^{'}_{k'})+K(\W').
\end{align*}
Par ailleurs, on a
\begin{align*}
&\hspace{-7.5cm}K(\F_i\boxtimes\W')=K(\W')+\sum_{1\leq k<k'\leq d}K(\F_i\boxtimes\F^{'}_{k}\boxtimes\F^{'}_{k'})
\end{align*}
et
\begin{small}
\begin{align*}
K(\F_i\boxtimes\F_j\boxtimes\W')&=K(\W')+\sum_{k=1}^{d}K(\F_i\boxtimes\F_j\boxtimes\F^{'}_{k})+\sum_{1\leq k<k'\leq d}K(\F_i\boxtimes\F^{'}_{k}\boxtimes\F^{'}_{k'})+\sum_{1\leq k<k'\leq d}K(\F_j\boxtimes\F^{'}_{k}\boxtimes\F^{'}_{k'})\\
&=K(\F_i\boxtimes\W')+K(\F_j\boxtimes\W')-K(\W')+\sum_{k=1}^{d}K(\F_i\boxtimes\F_j\boxtimes\F^{'}_{k}),
\end{align*}
\end{small}
d'où
\begin{align*}
&\hspace{-6.5cm}\sum_{i=1}^{n}\sum_{1\leq k<k'\leq d}K(\F_i\boxtimes\F^{'}_{k}\boxtimes\F^{'}_{k'})=-nK(\W')+\sum_{i=1}^{n}K(\F_i\boxtimes\W')
\end{align*}
et
\begin{Small}
\begin{align*}
\sum_{k=1}^{d}\sum_{1\leq i<j\leq n}K(\F_i\boxtimes\F_j\boxtimes\F^{'}_{k})
&=\sum_{1\leq i<j\leq n}K(\F_i\boxtimes\F_j\boxtimes\W')-\sum_{i=1}^{n}(n-i)K(\F_i\boxtimes\W')-\sum_{j=1}^{n}(j-1)K(\F_j\boxtimes\W')+\binom{n}{2}K(\W')\\
&=\sum_{1\leq i<j\leq n}K(\F_i\boxtimes\F_j\boxtimes\W')-(n-1)\sum_{i=1}^{n}K(\F_i\boxtimes\W')+\binom{n}{2}K(\W').
\end{align*}
\end{Small}

\noindent Par suite,
\begin{align*}
K(\W\boxtimes\W')
=K(\W)+\sum_{1\leq i<j\leq n}K(\F_i\boxtimes\F_j\boxtimes\W')-(n-2)\sum_{i=1}^{n}K(\F_i\boxtimes\W')+\left(\binom{n}{2}-n+1\right)K(\W'),
\end{align*}
d'où l'énoncé.
\end{proof}

\begin{pro}\label{pro:Leg-ell1-ell2-F-plat}
{\sl Soient $d\geq3$ un entier et $\F$ un feuilletage convexe réduit de degré $d-2$ sur $\pp.$ Si~$\ell_1$~et~$\ell_2$~sont des droites $\F$-invariantes distinctes, alors le $d$-tissu $\W_d=\Leg\ell_1\boxtimes\Leg\ell_2\boxtimes\Leg\F$ est plat.
}
\end{pro}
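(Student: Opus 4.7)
The plan is to apply Lemma~\ref{lem:K-W-W-prime} to reduce the computation of $K(\W_d)$ to a sum of curvatures of $3$-subwebs, then to collapse almost every term using the two flatness results already available: the flatness of $\Leg\F$ (Mar\'in--Pereira \cite{MP13}, since $\F$ is convex) and the flatness of $\Leg\ell_m\boxtimes\Leg\F$ for $m=1,2$ (Th\'eor\`eme~E of \cite{Bed23arxiv}, which handles the one-line case).

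Concretely, at a generic point of $\pd$ I would write $\Leg\F=\F_1\boxtimes\cdots\boxtimes\F_{d-2}$ and apply Lemma~\ref{lem:K-W-W-prime} with $\W=\Leg\F$ and $\W'=\Leg\ell_1\boxtimes\Leg\ell_2$. Three inputs enter the simplification: $K(\W')=0$ (a $2$-web has zero curvature by convention), $K(\Leg\F)=0$, and the decomposition of the vanishing $K(\Leg\ell_m\boxtimes\Leg\F)=0$ into $3$-subwebs, which gives
\[
\sum_{1\leq i<j\leq d-2}K(\F_i\boxtimes\F_j\boxtimes\Leg\ell_m)=0\qquad(m=1,2).
\]
Expanding each $4$-subweb $K(\F_i\boxtimes\F_j\boxtimes\Leg\ell_1\boxtimes\Leg\ell_2)$ as a sum of its four $3$-subwebs and rearranging, I expect the formula of Lemma~\ref{lem:K-W-W-prime} to collapse into the compact identity
\[
K(\W_d)=\sum_{i=1}^{d-2}K(\F_i\boxtimes\Leg\ell_1\boxtimes\Leg\ell_2).
\]

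It then remains to prove that this $2$-form is holomorphic along the generic points of the discriminant $\Delta(\W_d)$. The components of $\Delta(\W_d)$ contained in $\Delta(\Leg\F)$ are already controlled by the flatness of $\Leg\F$, and the tangency loci between $\Leg\ell_m$ and a factor of $\Leg\F$ are controlled by the flatness of $\Leg\ell_m\boxtimes\Leg\F$. The only genuinely new component is the line $\check p\subset\pd$ dual to the intersection point $p:=\ell_1\cap\ell_2$, which is precisely the tangency locus between the two radial foliations $\Leg\ell_1$ and $\Leg\ell_2$ centered at $\check\ell_1$ and $\check\ell_2$.

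The main obstacle is therefore the verification of holomorphy of $\sum_i K(\F_i\boxtimes\Leg\ell_1\boxtimes\Leg\ell_2)$ along $\check p$. My plan there is to pick an affine chart $(x,y)$ on $\pp$ with $\ell_1=\{x=0\}$ and $\ell_2=\{y=0\}$, so that $p$ is the origin and $\check p$ becomes an axis of the dual chart $(p,q)$. The $\F$-invariance of $\ell_1$ and $\ell_2$ forces the defining $1$-form of $\F$ near $p$ to take the form $y\,a(x,y)\diffx+x\,b(x,y)\diffy$, which, upon taking the Legendre transform described in the introduction, should make the germs of the $3$-subwebs $\F_i\boxtimes\Leg\ell_1\boxtimes\Leg\ell_2$ explicit enough to read off that their curvatures have no residue along $\check p$, in the spirit of the one-line computation carried out in the proof of Th\'eor\`eme~E of \cite{Bed23arxiv}.
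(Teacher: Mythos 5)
Your reduction formula $K(\W_d)=\sum_{i}K(\F_i\boxtimes\Leg\ell_1\boxtimes\Leg\ell_2)$ (valid at generic points, the sum being monodromy-invariant) is correct and is essentially Lemme~\ref{lem:K-W-W-prime} read in the other direction; it correctly isolates the dual line $\check p$ of $p=\ell_1\cap\ell_2$ as the genuinely new component of the discriminant. The gap is in what you plan to do there. In the local model you describe (two radial foliations whose tangency locus $\check p$ is invariant by the third foliation), the curvature of the $3$-web does \emph{not} have vanishing residue along $\check p$ in general: this is exactly the computation of Lemme~\ref{lem:K-Radial-Radial-F} of the paper, where the $1$-form $\eta(\W_3)$ acquires a term $\frac{1}{y}\bigl(n+1-\frac{h+x\partial_xh}{h(xy^{n-1}h-1)}\bigr)\mathrm{d}y$ whose residue is a non-constant function of $x$, so $K(\W_3)$ has a genuine simple pole along $\check p$. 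Your plan to ``read off that their curvatures have no residue'' therefore fails term by term, and you would need a cancellation in the sum that you give no reason to expect and that the paper never establishes. The paper's proof stops at ``at most simple poles along $\check p$'' (and only in the sub-case where $p$ is a non-radial singularity; when $p$ is radial it invokes~\cite[Remarque~3.10]{Bed23arxiv}) and then closes with a \emph{global} argument: $K(\W_d)$ is a meromorphic $2$-form on $\pd$ whose polar divisor is supported on the single line $\check p$ with multiplicity at most $1$, and since the canonical divisor of the projective plane has degree $-3$, such a form vanishes identically. This global step is the key idea of the proof and is entirely absent from your proposal.

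A secondary issue: your assertion that the components of $\Delta(\W_d)$ contained in $\Delta(\Leg\F)=\check{\Sigma}_{\F}^{\mathrm{rad}}$ are ``controlled by the flatness of $\Leg\F$'' does not follow. Flatness of $\Leg\F$ is a statement about the $3$-subwebs internal to $\Leg\F$, whereas your sum involves $3$-webs mixing one branch $\F_i$ of $\Leg\F$ with the two radial foliations; near the line dual to a radial singularity the branches of $\Leg\F$ are themselves singular (they merge into the irreducible $\nu$-web $\W_\nu$ of \cite[Proposition~3.3]{MP13}), and one must argue via the decomposition $\Leg\F=\W_\nu\boxtimes\W_{d-\nu-2}$ together with \cite[Proposition~2.6 ou Th\'eor\`eme~1]{MP13} and \cite[Proposition~3.9]{Bed23arxiv}, according to whether the singularity lies off $\ell_1\cup\ell_2$, on exactly one of the two lines, or at $p$ --- which is precisely the three-case analysis carried out in the paper. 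These points are fixable with the standard tools, but your formula does not bypass them, and without the final global degree argument the proof does not close.
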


\noindent Pour démontrer cette proposition, nous avons besoin du lemme suivant.

\begin{lem}\label{lem:K-Radial-Radial-F}
{\sl Soit $\W_3=\F_1\boxtimes\F_2\boxtimes\F_3$ un germe de $3$-tissu de $(\C^2,0).$ On suppose que $\F_1$ et $\F_2$ sont des germes de feuilletages radiaux et que $D:=\Tang(\F_1,\F_2)$ est invariante par $\F_3.$ Alors la courbure de $\W_3$ a~au~plus des pôles simples le long de $D.$}
\end{lem}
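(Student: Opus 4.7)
The plan is to verify the claimed pole order by an explicit computation in local coordinates adapted to $D$. Since two radial foliations with distinct centers $p_1, p_2$ have tangency locus equal to the line $(p_1 p_2)$, we have $D = (p_1 p_2)$, and in particular the centers lie on $D$. I would choose coordinates $(x, y)$ so that $D = \{y = 0\}$, $p_1 = (-a, 0)$, $p_2 = (b, 0)$ with $a + b \ne 0$. The foliations $\F_1$ and $\F_2$ are then defined by
\[
\Omega_1 = (x + a)\,\diffy - y\,\diffx, \qquad \Omega_2 = (x - b)\,\diffy - y\,\diffx,
\]
and the $\F_3$-invariance of $D$ forces the defining form of $\F_3$ to take the shape $\Omega_3 = y\,\alpha(x, y)\,\diffx + \beta(x, y)\,\diffy$ with $\alpha, \beta$ holomorphic. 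Solving the linear relation $u_1 \Omega_1 + u_2 \Omega_2 + \Omega_3 = 0$ yields
\[
u_1 = \frac{\alpha(b - x) - \beta}{a + b}, \qquad u_2 = \frac{\alpha(x + a) + \beta}{a + b},
\]
which are holomorphic and generically non-vanishing along $D$.

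Set $\omega_i := u_i \Omega_i$ (with $\omega_3 = \Omega_3$), so that $\omega_1 + \omega_2 + \omega_3 = 0$, and write the sought 1-form as $\eta = P\,\diffx + Q\,\diffy$. The equations $d\omega_i = \eta \wedge \omega_i$ for $i = 1, 2$ form a $2 \times 2$ linear system in $(P, Q)$ with matrix
\[
\begin{pmatrix} u_1 (x + a) & u_1\,y \\ u_2 (x - b) & u_2\,y \end{pmatrix},
\]
of determinant $(a + b)\,u_1 u_2\,y$; the equation for $i = 3$ follows automatically from $\sum \omega_i = 0$. Solving by Cramer, the numerator for $P$ inherits an overall factor $y$ from the second column, which cancels the $y$ in the determinant, so that $P$ is holomorphic along $D$. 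By contrast, the Cramer numerator for $Q$ has no such factor, so $Q$ has at most a simple pole in $y$ along $D$. Since $K(\W_3) = d\eta = (\partial_x Q - \partial_y P)\,\diffx \wedge \diffy$, and differentiation in $x$ does not worsen the pole order in $y$, the curvature has at most a simple pole along $D$, as claimed.

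The only step requiring real care is the pole bookkeeping: one must check explicitly that the factor $y$ in the determinant $(a + b)\,u_1 u_2\,y$ is absorbed by the Cramer numerator for $P$ but not by that for $Q$. Apart from this, the proof is entirely computational; the implicit genericity assumption---working at a point of $D$ distinct from $p_1, p_2$ and from the zero loci of $u_1, u_2$---is harmless, since the statement concerns only the behavior at generic points of $D$.
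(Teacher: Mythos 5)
Your strategy---an explicit local computation of the common $1$-form $\eta$ followed by pole bookkeeping along $D$---is exactly the one the paper uses, and your setup (coordinates adapted to $D$, the relation $u_1\Omega_1+u_2\Omega_2+\Omega_3=0$, Cramer's rule, and the observation that the equation for $i=3$ is automatic) is correct. There is, however, one genuine gap: the assertion that $u_1$ and $u_2$ are \emph{generically non-vanishing along} $D$. Computing $\Omega_1\wedge\Omega_3=-y\bigl((x+a)\alpha+\beta\bigr)\diffx\wedge\diffy$ and $\Omega_2\wedge\Omega_3=-y\bigl((x-b)\alpha+\beta\bigr)\diffx\wedge\diffy$, one sees that $u_2$ (resp.\ $u_1$) vanishes \emph{identically} on $D=\{y=0\}$ exactly when $\F_3$ is tangent to $\F_1$ (resp.\ $\F_2$) along $D$ to order $\geq2$. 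Nothing in the hypotheses excludes this: the paper's own normal form $\omega_3=\diffy-y^{n}h(x,y)\diffx$ carries the parameter $n\geq1$ precisely to cover it, and in the application (case 3 of the proof of the Proposition~\ref{pro:Leg-ell1-ell2-F-plat}) there is no reason for the foliation $\W_1$ to be only simply tangent to $\Leg\ell_1$ and $\Leg\ell_2$ along $\check{s}_0$. If, say, $u_2=y^{\,n-1}v$ with $n\geq2$ and $v|_{y=0}\not\equiv0$ (only one of $u_1,u_2$ can degenerate, since $\F_1$ and $\F_2$ are simply tangent to each other along $D$), your determinant $(a+b)\,u_1u_2\,y$ vanishes to order $n$ along $D$, not $1$, and the announced bookkeeping---one factor of $y$ to absorb---no longer suffices as written; moreover your closing remark about working away from the zero loci of $u_1,u_2$ cannot save the argument, because that zero locus then contains all of $D$.

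The conclusion does survive, but only after checking an extra cancellation that your plan passes over: with $c_i:=\partial_y(u_iy)+\partial_x\bigl(u_i\cdot(\text{coefficient of }\diffy\text{ in }\Omega_i)\bigr)$ denoting the right-hand sides, one verifies that $c_2$ also vanishes to order $n-1$ along $D$, so both Cramer numerators acquire the extra factor $y^{\,n-1}$; hence $P$ stays holomorphic and $Q$ keeps at most a simple pole. This is precisely the point that needs to be made explicit. For comparison, the paper normalizes the two radial centers to the origin and to infinity ($\omega_1=\diffy$, $\omega_2=x\diffy-y\diffx$), writes $\omega_3=\diffy-y^{n}h\,\diffx$ with the tangency order $n$ visible from the start, and reads the simple pole off a closed formula for $\eta(\W_3)$ in which the coefficient of $\frac{\diffy}{y}$ is manifestly holomorphic; your finite-centers version is equivalent, but it must carry the order $n$ through the computation rather than assume $n=1$.
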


\begin{proof}[\sl D\'emonstration]
Choisissons un système de coordonnées $(x,y)$ dans lequel $D=\{y=0\}$ et $\F_1$, resp. $\F_2$, resp. $\F_3$, est défini par
\begin{align*}
&\omega_1=\mathrm{d}y,&& \text{resp. } \omega_2=x\mathrm{d}y-y\mathrm{d}x,&& \text{resp. }\omega_3=\mathrm{d}y-y^nh(x,y)\mathrm{d}x,
\end{align*}
avec $n\in\N^*$ et $h(x,0)\not\equiv0.$ Un calcul direct utilisant~\cite[formule~(1.1)]{BM18Bull} montre que la courbure de $\W_3$ s'écrit sous la forme $K(\W_3)=\mathrm{d}\eta(\W_3)$, où
\begin{align*}
\eta(\W_3)=\eta_0+\frac{1}{y}\left(n+1-\frac{h(x,y)+x\partial_xh(x,y)}{h(x,y)(xy^{n-1}h(x,y)-1)}\right)\mathrm{d}y,
\end{align*}
où $\eta_0$ est une $1$-forme holomorphe le long de $D=\{y=0\},$ d'où le lemme.
\end{proof}

\begin{proof}[\sl D\'emonstration de la Proposition~\ref{pro:Leg-ell1-ell2-F-plat}]
Comme $\Leg\ell_1$ et $\Leg\ell_2$ sont des feuilletages radiaux centrés en les points duaux des droites $\ell_1$ et $\ell_2$ respectivement, $\Tang(\Leg\ell_1,\Leg\ell_2)$ est réduit à la droite duale du point $s_0=\ell_1\cap\ell_2$, qui~est~un~point singulier de $\F$ car c'est l'intersection de deux droites $\F$-invariantes. Pour~$i=1,2,$ posons~$\Sigma_{\F}^{\ell_i}:=\Sing\F\cap\ell_i$ et désignons par $\check{\Sigma}_{\F}^{\ell_i}$ l'ensemble des droites duales des points de $\Sigma_{\F}^{\ell_i}$; nous~avons~$\Sigma_{\F}^{\ell_1}\cap\Sigma_{\F}^{\ell_2}=\{s_0\}$, et en vertu de~\cite[Lemme~2.1]{Bed23arxiv}, $\Tang(\Leg\ell_i,\Leg\F)=\check{\Sigma}_{\F}^{\ell_i}$. Par ailleurs, comme $\F$ est convexe réduit, toutes ses singularités sont non-dégénérées (\cite[Lemme~6.8]{BM18Bull}); d'après~\cite[Lemme~2.2]{BFM14} $\Delta(\Leg\F)=\check{\Sigma}_{\F}^{\mathrm{rad}},$ où $\check{\Sigma}_{\F}^{\mathrm{rad}}$ désigne l'ensemble des droites duales des singularités radiales de $\F.$ Il~en~résulte~que
\begin{align*}
\Delta(\W_d)=\check{\Sigma}_{\F}^{\mathrm{rad}}\cup\check{\Sigma}_{\F}^{\ell_1}\cup\check{\Sigma}_{\F}^{\ell_2}.
\end{align*}
Fixons $s\in\Sigma_{\F}^{\mathrm{rad}}\cup\Sigma_{\F}^{\ell_1}\cup\Sigma_{\F}^{\ell_2}$; nous allons décrire le $d$-tissu $\W_d$ près de la droite $\check{s}$ duale de $s.$ Notons $\nu=\tau(\F,s)$ l'ordre de tangence de $\F$ avec une droite générique passant par $s$; alors $s\in\Sigma_{\F}^{\mathrm{rad}}$ si et seulement si $\nu\geq2$, \emph{voir}~\cite[\S1.3]{BM18Bull}. En vertu de la Proposition~3.3~de~\cite{MP13} (\emph{cf.} preuve de~\cite[Théorème~E]{Bed23arxiv}), localement près de $\check{s}$, le $(d-2)$-tissu $\Leg\F$ peut se décomposer en $\Leg\F=\W_{\nu}\boxtimes\W_{d-\nu-2},$ où $\W_{\nu}$ est un $\nu$-tissu irréductible laissant $\check{s}$ invariante et dont la multiplicité du discriminant $\Delta(\W_{\nu})$ le long de $\check{s}$ est minimale, égale à~$\nu-1,$ et où $\W_{d-\nu-2}$ est un $(d-\nu-2)$-tissu régulier et transverse à $\check{s}$. Ainsi, près de la droite $\check{s},$ nous~avons~la~décomposition
\begin{align}\label{equa:W-d}
\W_d=\Leg\ell_1\boxtimes\Leg\ell_2\boxtimes\W_{\nu}\boxtimes\W_{d-\nu-2}.
\end{align}

\noindent Nous allons maintenant étudier l'holomorphie de la courbure de $\W_d$ le long de $\check{s}.$ Il y a trois cas à considérer:
\begin{itemize}
\item [\textbf{\textit{1.}}] Cas où $s\not\in\ell_1\cup\ell_2$; alors $s\in\Sigma_{\F}^{\mathrm{rad}}.$ Dans ce cas, pour $i=1,2$, $\Leg\ell_i$ est transverse à $\check{s}$ et $\check{s}\not\subset\check{\Sigma}_{\F}^{\ell_i}=\Tang(\Leg\ell_i,\Leg\F)$; donc $\W_{d-\nu}:=\Leg\ell_1\boxtimes\Leg\ell_2\boxtimes\W_{d-\nu-2}$ est régulier et transverse à $\check{s}.$ Par suite, la~courbure de $\W_d=\W_{\nu}\boxtimes\W_{d-\nu}$ est holomorphe sur $\check{s}$ par application de \cite[Proposition~2.6]{MP13}.
\smallskip

\item [\textbf{\textit{2.}}] Cas où $s\in\ell_i\setminus\ell_j$ avec $\{i,j\}=\{1,2\}.$ Alors $\check{s}$ est invariante par $\Leg\ell_i$ et $\Leg\ell_j$ est transverse à $\check{s}$; de plus $\check{s}\not\subset\check{\Sigma}_{\F}^{\ell_j}=\Tang(\Leg\ell_j,\Leg\F).$ Ainsi $\W_d=\Leg\ell_i\boxtimes\W_{\nu}\boxtimes\W_{d-\nu-1}$, où $\W_{d-\nu-1}:=\Leg\ell_j\boxtimes\W_{d-\nu-2}$ est régulier et transverse à $\check{s}.$ En appliquant \cite[Théorème~1]{MP13} si $\nu=1$ et \cite[Proposition~3.9]{Bed23arxiv} si $\nu\geq2,$ nous en déduisons que $K(\W_d)$ est holomorphe le long de $\check{s}.$
\smallskip

\item [\textbf{\textit{3.}}] Cas où $s=s_0:=\ell_1\cap\ell_2.$ Alors $\check{s}$ est invariante par $\Leg\ell_1$ et $\Leg\ell_2.$ Nous distinguons deux sous-cas suivant que le point singulier $s$ est radial ou non, {\it i.e.} suivant que $\nu=1$ ou $\nu\geq2.$
\smallskip

\begin{itemize}
\item [$\bullet$] Si $\nu\geq2$ alors la courbure de $\W_d$ est holomorphe sur $\check{s}$ par application de~\cite[Remarque~3.10]{Bed23arxiv}.
\smallskip

\item [$\bullet$] Supposons $\nu=1.$ Alors $\W_d=\Leg\ell_1\boxtimes\Leg\ell_2\boxtimes\W_1\boxtimes\W_{d-3}$; d'après le Lemme~\ref{lem:K-W-W-prime}, nous avons
\begin{align*}
\hspace{1.5cm}K(\W_d)&=K(\Leg\ell_1\boxtimes\Leg\ell_2\boxtimes\W_1)+K(\Leg\ell_1\boxtimes\Leg\ell_2\boxtimes\W_{d-3})-K(\W_1\boxtimes\W_{d-3})+K(\W_{d-3})\\
&\hspace{4.18mm}+\sum_{i=1}^{2}K(\Leg\ell_i\boxtimes\W_1\boxtimes\W_{d-3})-\sum_{i=1}^{2}K(\Leg\ell_i\boxtimes\W_{d-3}).
\end{align*}
Les tissus $\W_{d-3}$, $\W_1\boxtimes\W_{d-3}$ et $\Leg\ell_i\boxtimes\W_{d-3}$ sont réguliers près de $\check{s}$ et donc leurs courbures sont holomorphes sur $\check{s}.$ Les courbures des tissus $\Leg\ell_1\boxtimes\Leg\ell_2\boxtimes\W_{d-3}$ et $\Leg\ell_i\boxtimes\W_1\boxtimes\W_{d-3}$ sont holomorphes le long de $\check{s}$ par application de \cite[Théorème~1]{MP13}. Enfin, par le Lemme~\ref{lem:K-Radial-Radial-F}, $K(\Leg\ell_1\boxtimes\Leg\ell_2\boxtimes\W_1)$ a au plus des pôles simples le long de $\check{s}.$ Nous en déduisons qu'il en est de même~pour~$K(\W_d).$
\end{itemize}
\end{itemize}
\smallskip

\noindent Il s'en suit que la $2$-forme $K(\W_d)$ est holomorphe sur $\pd\setminus\{\check{s}_0\}$ et qu'elle a au plus des pôles simples le long de $\check{s}_0.$ Comme le diviseur canonique de $\pp$ est de degré~$-3$, nous en concluons que $K(\W_d)$ est identiquement nulle.
\end{proof}

\begin{proof}[\sl D\'emonstration du Théorème~\ref{theoreme:C-invariante-convexe-reduit-plat}]
Supposons que la courbe $\mathcal{C}$ soit composée de $n\geq1$ droites $\F$-invariantes distinctes $\ell_1,\ell_2,\ldots,\ell_n$. Comme $\F$ est par hypothèse convexe réduit, le tissu $\Leg\F$, resp. $\Leg\ell_i\boxtimes\Leg\F$, resp.~$\Leg\ell_i\boxtimes\Leg\ell_j\boxtimes\Leg\F$ avec $i\neq j$, est plat en vertu de \cite[Théorème~2]{MP13}, resp.~\cite[Théorème~E]{Bed23arxiv}, resp.~la~Proposition~\ref{pro:Leg-ell1-ell2-F-plat}. Le~Théorème~\ref{theoreme:C-invariante-convexe-reduit-plat} s'obtient alors immédiatement en appliquant le Lemme~\ref{lem:K-W-W-prime} avec $\W=\Leg\mathcal{C}=\Leg\ell_1\boxtimes\Leg\ell_2\boxtimes\cdots\boxtimes\Leg\ell_n$ et $\W'=\Leg\F.$
\end{proof}


\section{Preuve du Théorème~\ref{theoreme:leg-pre-feuilletage-homogene-convexe-plat}}\label{sec:preuve-theoreme-2}

\noindent La démonstration du Théorème~\ref{theoreme:leg-pre-feuilletage-homogene-convexe-plat} utilise le Lemme~\ref{lem:K-W-W-prime} et les deux propositions suivantes.

\begin{pro}\label{pro:Leg-L-infini-ell-H-plat}
{\sl Soient $d\geq3$ un entier, $\mathcal{H}$ un feuilletage homogène convexe de degré $d-2$ sur $\pp$ et~$\ell$~une~droite $\mathcal{H}$-invariante passant par l'origine. Alors le $d$-tissu $\W_d=\Leg\hspace{0.3mm}L_\infty\boxtimes\Leg\ell\boxtimes\Leg\mathcal{H}$ est plat.
}
\end{pro}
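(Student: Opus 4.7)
Le plan est de suivre, essentiellement mot pour mot, l'argument de la Proposition~\ref{pro:Leg-ell1-ell2-F-plat}, en rempla\c{c}ant le couple $(\ell_1,\ell_2)$ par $(L_\infty,\ell)$. Le fait structurel rendant cette transcription possible est que, puisque $\mathcal{H}$ est homog\`ene dans une carte affine centr\'ee en l'origine $O$, on a $\Sing\,\mathcal{H}\subset\{O\}\cup L_\infty$; en particulier, $O\in\ell\setminus L_\infty$ est automatiquement une singularit\'e de $\mathcal{H}$, et $s_0:=L_\infty\cap\ell$ joue ici le r\^ole du point distingu\'e o\`u $\Leg L_\infty$ et $\Leg\ell$ sont simultan\'ement tangents. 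La premi\`ere \'etape consistera \`a d\'eterminer le discriminant
\[
\Delta(\W_d)=\check{\Sigma}_{\mathcal{H}}^{\mathrm{rad}}\cup\check{\Sigma}_{\mathcal{H}}^{L_\infty}\cup\check{\Sigma}_{\mathcal{H}}^{\ell},
\]
en utilisant l'analogue, dans le cadre homog\`ene convexe, de~\cite[Lemme~2.1]{Bed23arxiv} (qui donne $\Tang(\Leg L_\infty,\Leg\mathcal{H})=\check{\Sigma}_{\mathcal{H}}^{L_\infty}$ et de m\^eme pour $\ell$), ainsi que de~\cite[Lemme~2.2]{BFM14} (qui donne $\Delta(\Leg\mathcal{H})=\check{\Sigma}_{\mathcal{H}}^{\mathrm{rad}}$).

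Je m\`enerai ensuite la m\^eme analyse de cas que dans la Proposition~\ref{pro:Leg-ell1-ell2-F-plat}: pour $s\in\Sing\,\mathcal{H}$, la Proposition~3.3 de~\cite{MP13} fournit, pr\`es de $\check s$, une d\'ecomposition $\Leg\mathcal{H}=\W_\nu\boxtimes\W_{d-\nu-2}$ avec $\nu=\tau(\mathcal{H},s)$. Si $s\in L_\infty\setminus\{s_0\}$ (resp. $s=O$), alors $\check s$ est invariante par $\Leg L_\infty$ (resp. $\Leg\ell$) et transverse \`a $\Leg\ell$ (resp. $\Leg L_\infty$), et l'argument du Cas~2 de la Proposition~\ref{pro:Leg-ell1-ell2-F-plat} s'applique, en invoquant~\cite[Th\'eor\`eme~1]{MP13} si $\nu=1$ et~\cite[Proposition~3.9]{Bed23arxiv} si $\nu\geq 2$. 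Si $s=s_0$, $\check{s}_0$ est invariante \`a la fois par $\Leg L_\infty$ et $\Leg\ell$: j'appliquerai~\cite[Remarque~3.10]{Bed23arxiv} lorsque $\nu\geq 2$, et lorsque $\nu=1$ je d\'evelopperai $K(\W_d)$ au moyen du Lemme~\ref{lem:K-W-W-prime} avec $\W=\Leg L_\infty\boxtimes\Leg\ell$ et $\W'=\W_1\boxtimes\W_{d-3}$, en observant que chaque terme autre que $K(\Leg L_\infty\boxtimes\Leg\ell\boxtimes\W_1)$ est holomorphe le long de $\check{s}_0$ (soit par r\'egularit\'e du sous-tissu concern\'e, soit par~\cite[Th\'eor\`eme~1]{MP13}), tandis que $K(\Leg L_\infty\boxtimes\Leg\ell\boxtimes\W_1)$ pr\'esente au plus des p\^oles simples le long de $\check{s}_0$ en vertu du Lemme~\ref{lem:K-Radial-Radial-F}. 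Par suite $K(\W_d)$ est holomorphe sur $\pd\setminus\{\check{s}_0\}$ avec au plus un p\^ole simple le long de $\check{s}_0$; comme le diviseur canonique de $\pd$ est de degr\'e $-3$, ceci force $K(\W_d)\equiv 0$.

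Le principal obstacle sera de v\'erifier que les deux ingr\'edients cl\'es du cas convexe r\'eduit -- l'identification $\Tang(\Leg\ell,\Leg\mathcal{H})=\check{\Sigma}_{\mathcal{H}}^{\ell}$ et la non-d\'eg\'en\'erescence de $\Sing\,\mathcal{H}$ entra\^inant $\Delta(\Leg\mathcal{H})=\check{\Sigma}_{\mathcal{H}}^{\mathrm{rad}}$ -- se transposent correctement au cadre homog\`ene convexe. En effet, la singularit\'e de $\mathcal{H}$ en $O$ est toujours pr\'esente et l'ordre de tangence $\tau(\mathcal{H},O)$ est en g\'en\'eral $\geq 2$; il faut donc notamment s'assurer que la d\'ecomposition locale de~\cite[Proposition~3.3]{MP13} et l'estimation de p\^ole du Lemme~\ref{lem:K-Radial-Radial-F} restent valides en ce point. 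Ces deux points \'etant essentiellement d\'ej\`a disponibles dans~\cite{Bed23arxiv}, l'argument se ram\`enera en fin de compte \`a un recensement soigneux de r\'ef\'erences et \`a la transcription de l'analyse de cas ci-dessus.
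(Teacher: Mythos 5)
Votre transcription de la Proposition~\ref{pro:Leg-ell1-ell2-F-plat} achoppe pr\'ecis\'ement l\`a o\`u vous signalez le \og principal obstacle \fg, et celui-ci n'est pas une simple v\'erification de r\'ef\'erences~: toute la difficult\'e du cas homog\`ene est concentr\'ee le long de la droite $\check{O}$ duale de l'origine. D'abord, $O$ est une singularit\'e d\'eg\'en\'er\'ee de $\mathcal{H}$, donc \cite[Lemme~2.2]{BFM14} ne s'applique pas~; la bonne r\'ef\'erence est \cite[Lemme~3.2]{BM18Bull}, qui donne $\Delta(\Leg\mathcal{H})=\radHd\cup\check{O}$. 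Ensuite et surtout, votre traitement du point $s=O$ par l'argument du \og Cas~2 \fg\ suppose la d\'ecomposition de \cite[Proposition~3.3]{MP13} avec $\nu=\tau(\mathcal{H},O)=d-2$ et la multiplicit\'e \emph{minimale} $\nu-1$ de $\Delta(\W_{\nu})$ le long de $\check{O}$, hypoth\`ese indispensable \`a \cite[Proposition~3.9]{Bed23arxiv}. Or ici $\W_{\nu}$ est $\Leg\mathcal{H}$ tout entier (la partie transverse $\W_{d-\nu-2}$ est vide) et la restriction de l'\'equation de $\Leg\mathcal{H}$ \`a $\check{O}=\{q=0\}$ vaut $x^{d-2}\bigl(A(1,p)+pB(1,p)\bigr)$~: les $d-2$ racines se confondent toutes en $x=0$, de sorte que la multiplicit\'e de $\Delta(\Leg\mathcal{H})$ le long de $\check{O}$ d\'epasse en g\'en\'eral largement $d-3$ d\`es que $d\geq4$. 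Votre analyse locale ne contr\^ole donc pas les p\^oles de $K(\W_d)$ le long de $\check{O}$, et la conclusion \og holomorphe hors de $\check{s}_0$, p\^ole au plus simple le long de $\check{s}_0$ \fg\ ne se referme pas. Ce n'est pas un hasard si la Proposition~\ref{pro:Leg-ell1-ell2-ell-H-plat}, confront\'ee au m\^eme probl\`eme, \'evite toute analyse locale le long de $\check{O}$ et conclut via \cite[Lemme~3.1]{BFM14} \`a partir de l'holomorphie sur $\pd\setminus\check{O}$ --- \'echappatoire qui ne vous est pas ouverte telle quelle, puisque le long de $\check{s}_0$ vous n'obtenez qu'une majoration de p\^ole et non l'holomorphie.

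La preuve du texte est en fait d'une tout autre nature et tient en quelques lignes~: dans la carte $(p,q)$, le feuilletage $\Leg L_\infty$ est d\'efini par le champ $\mathrm{X}=q\frac{\partial}{\partial q}$, dont le flot $(p,q)\mapsto(p,\mathrm{e}^{t}q)$ pr\'eserve $\Leg(\ell\boxtimes\mathcal{H})$ puisque $\ell\boxtimes\mathcal{H}$ est homog\`ene~; $\mathrm{X}$ est donc une sym\'etrie transverse de ce tissu, et \cite[Th\'eor\`eme~3.4]{Bed23arxiv} donne $K(\W_d)=K(\Leg(\ell\boxtimes\mathcal{H}))$, courbure qui est nulle d'apr\`es \cite[Corollaire~D]{Bed23arxiv}. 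Si vous tenez \`a une preuve par analyse du discriminant, il vous faut un contr\^ole sp\'ecifique de la courbure le long de $\check{O}$, du type de celui que cette sym\'etrie fournit dans \cite[Corollaire~3.4]{BM18Bull}~; en l'\'etat, votre proposition comporte une lacune r\'eelle en ce point.
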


\begin{proof}[\sl D\'emonstration]
Nous avons $\W_d=\Leg\hspace{0.3mm}L_\infty\boxtimes\Leg\preh$, où $\preh:=\ell\boxtimes\mathcal{H}$. Si le pré-feuilletage homogène $\preh$ est défini par la $1$-forme
\begin{align*}
\omega=(ax+by)\left(A(x,y)\mathrm{d}x+B(x,y)\mathrm{d}y\right),\quad A,B\in\mathbb{C}[x,y]_{d-2},\hspace{2mm}\pgcd(A,B)=1,
\end{align*}
alors, dans la carte affine $(p,q)$ de $\pd$ associée à la droite $\{y=px-q\}\subset\pp,$ le tissu $\Leg\preh$ est décrit par la forme symétrique
\begin{align*}
\check{\omega}=\Big(a\mathrm{d}q+b(p\mathrm{d}q-q\mathrm{d}p)\Big)\Big(A(\mathrm{d}q,p\mathrm{d}q-q\mathrm{d}p)+pB(\mathrm{d}q,p\mathrm{d}q-q\mathrm{d}p)\Big).
\end{align*}
Par ailleurs, le feuilletage radial $\Leg\hspace{0.3mm}L_\infty$ est défini par la $1$-forme $\check{\omega}_0=\mathrm{d}p$; il est aussi donné par
le champ de vecteurs $\mathrm{X}:=q\frac{\partial }{\partial q}$ qui est une {\sl symétrie} du tissu $\Leg\preh$ au sens où son flot $\phi_t=(p,\mathrm{e}^{t}q)$ préserve $\Leg\preh.$ Comme de plus $\mathrm{X}$ est transverse à $\Leg\preh,$ \cite[Théorème~3.4]{Bed23arxiv} entraîne que $K(\W_d)=K(\Leg\preh).$ Or,~d'après~\cite[Corollaire~D]{Bed23arxiv}, $K(\Leg\preh)\equiv0,$ d'où le résultat.
\end{proof}

\begin{pro}\label{pro:Leg-ell1-ell2-ell-H-plat}
{\sl Soient $d\geq3$ un entier et $\mathcal{H}$ un feuilletage homogène convexe de degré $d-2$ sur $\pp.$ Si~$\ell_1$ et $\ell_2$ sont des droites $\mathcal{H}$-invariantes passant par l'origine, alors le $d$-tissu $\W_d=\Leg\ell_1\boxtimes\Leg\ell_2\boxtimes\Leg\mathcal{H}$ est plat.
}
\end{pro}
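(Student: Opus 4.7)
The plan is to mimic the case-by-case analysis of the proof of Proposition~\ref{pro:Leg-ell1-ell2-F-plat}: describe $\Delta(\W_d)$, show that $K(\W_d)$ is holomorphic along every component of this discriminant except possibly the line $\check{O}$ dual to $O=\ell_1\cap\ell_2$ (where at worst simple poles may occur), and conclude by the degree-$(-3)$ argument on $\pd$ that $K(\W_d)\equiv 0$.

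Since $\ell_1$ and $\ell_2$ both pass through $O$, one has $\ell_1\cap\ell_2=\{O\}$ and $\Tang(\Leg\ell_1,\Leg\ell_2)=\check{O}$. Setting $\Sigma_{\Hcal}^{\ell_i}:=\Sing\Hcal\cap\ell_i$, the same arguments as in the opening of the proof of Proposition~\ref{pro:Leg-ell1-ell2-F-plat} (Lemme~2.1 de~\cite{Bed23arxiv} for the tangencies $\Tang(\Leg\ell_i,\Leg\Hcal)=\check{\Sigma}_{\Hcal}^{\ell_i}$ and Lemme~2.2 de~\cite{BFM14} for $\Delta(\Leg\Hcal)=\radHd$) give $\Delta(\W_d)\subseteq\radHd\cup\check{\Sigma}_{\Hcal}^{\ell_1}\cup\check{\Sigma}_{\Hcal}^{\ell_2}$. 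For each singular point $s$ of $\Hcal$ in this locus and $\nu:=\tau(\Hcal,s)$, Proposition~3.3 de~\cite{MP13} provides the local decomposition $\Leg\Hcal=\W_{\nu}\boxtimes\W_{d-\nu-2}$ near $\check{s}$, with $\check{s}$ invariant by $\W_{\nu}$ and $\W_{d-\nu-2}$ regular transverse to $\check{s}$; no non-degeneracy hypothesis is required at this step, so the fact that $\Hcal$ is homogeneous rather than convex reduced causes no difficulty here.

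Three configurations for $s$ arise. When $s\notin\ell_1\cup\ell_2$ or $s\in\ell_i\setminus\ell_j$, the arguments of Cases~1 and 2 of the proof of Proposition~\ref{pro:Leg-ell1-ell2-F-plat} apply verbatim, using Proposition~2.6 de~\cite{MP13}, Th\'eor\`eme~1 de~\cite{MP13}, or Proposition~3.9 de~\cite{Bed23arxiv}, and yield the holomorphicity of $K(\W_d)$ along $\check{s}$. The main obstacle is the case $s=O$: by homogeneity of $\Hcal$ one has $\nu=\tau(\Hcal,O)=d-2$, the maximal value. For $d\geq 4$, $\nu\geq 2$ and Remarque~3.10 de~\cite{Bed23arxiv} directly yields the holomorphicity of $K(\W_d)$ along $\check{O}$. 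For $d=3$, $\nu=1$ and one imitates the sub-case $\nu=1$ of Case~3 of Proposition~\ref{pro:Leg-ell1-ell2-F-plat}: expand $K(\W_d)$ via Lemme~\ref{lem:K-W-W-prime}, verify that every resulting term except $K(\Leg\ell_1\boxtimes\Leg\ell_2\boxtimes\W_1)$ is holomorphic along $\check{O}$ (by regularity/transversality considerations and Th\'eor\`eme~1 de~\cite{MP13}), and apply Lemme~\ref{lem:K-Radial-Radial-F} to conclude that this remaining term has at most simple poles along $\check{O}$.

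Combining these three cases, $K(\W_d)$ is holomorphic on $\pd\setminus\check{O}$ and has at most simple poles along the single line $\check{O}$; the degree-$(-3)$ canonical divisor of $\pd$ then forces $K(\W_d)\equiv 0$. The heart of the matter is the case $s=O$, as it is the only singular point where the homogeneous hypothesis genuinely differs from the convex-reduced hypothesis of Proposition~\ref{pro:Leg-ell1-ell2-F-plat}; elsewhere the arguments of the two proofs are essentially identical.
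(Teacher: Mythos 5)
Votre squelette (description du discriminant, analyse au cas par cas le long de chaque composante, conclusion par un argument global sur $\pd$) est le bon, et vos cas $s\in\{s_1,s_2\}$ et $s\in\radH$ co\"incident avec la preuve du texte. Mais il y a une lacune r\'eelle, concentr\'ee pr\'ecis\'ement l\`a o\`u vous placez \og le c\oe ur du probl\`eme\fg, \`a savoir la droite $\check{O}$. D'abord, votre description du discriminant est inexacte~: le Lemme~2.2 de~\cite{BFM14} suppose toutes les singularit\'es non-d\'eg\'en\'er\'ees, ce qui est vrai pour un feuilletage convexe r\'eduit mais faux pour un feuilletage homog\`ene de degr\'e $\geq2$ (l'origine est une singularit\'e d\'eg\'en\'er\'ee de multiplicit\'e alg\'ebrique $d-2$). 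La bonne r\'ef\'erence est \cite[Lemme~3.2]{BM18Bull}, qui donne $\Delta(\Leg\Hcal)=\radHd\cup\check{O}$~: la droite $\check{O}$ est une composante du discriminant de $\Leg\Hcal$ lui-m\^eme, ind\'ependamment des tangences avec $\Leg\ell_1$ et $\Leg\ell_2$.

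Ensuite, votre traitement de $\check{O}$ pour $d\geq4$ ne tient pas. La Remarque~3.10 de~\cite{Bed23arxiv} s'applique \`a la configuration issue de \cite[Proposition~3.3]{MP13} o\`u le $\nu$-tissu $\W_\nu$ a un discriminant de multiplicit\'e \emph{minimale} $\nu-1$ le long de $\check{s}$ --- propri\'et\'e v\'erifi\'ee dans le cadre convexe r\'eduit, mais qui \'echoue \`a l'origine d'un feuilletage homog\`ene. Par exemple, pour $\Hcal$ d\'efini par $y^{2}\mathrm{d}x-x^{2}\mathrm{d}y$ (donc $d=4$, $\nu=\tau(\Hcal,O)=2$), on calcule $\check{F}=(p^{2}-p)x^{2}-2pqx+q^{2}$ et le $p$-discriminant vaut $4pq^{2}$~: la multiplicit\'e de $\Delta(\Leg\Hcal)$ le long de $\check{O}=\{q=0\}$ est $2>\nu-1=1$. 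Vous invoquez donc un \'enonc\'e dont les hypoth\`eses ne sont pas satisfaites au seul point o\`u l'hypoth\`ese \og homog\`ene\fg{} diff\`ere vraiment de \og convexe r\'eduit\fg. La preuve du texte contourne enti\`erement cette difficult\'e~: elle n'analyse jamais $\check{O}$, mais \'etablit l'holomorphie de $K(\W_d)$ sur $\pd\setminus\check{O}$ puis conclut $K(\W_d)\equiv0$ par \cite[Lemme~3.1]{BFM14} (cf. \cite[Lemme~3.8]{Bed23arxiv}), qui dispense de tout contr\^ole de l'ordre des p\^oles le long de $\check{O}$. C'est cet ingr\'edient qui manque \`a votre argument (votre sous-cas $d=3$ via le Lemme~\ref{lem:K-Radial-Radial-F} serait, lui, correct, mais il ne couvre pas $d\geq4$).
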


\begin{proof}[\sl D\'emonstration]
Pour $i=1,2$, posons $s_i:=L_\infty\cap\ell_i\in\Sing\mathcal{H}$ et notons $\check{s_i}$ (resp. $\check{O}$) la droite duale du point $s_i$ (resp. de l'origine $O$); nous avons $\Tang(\Leg\ell_1,\Leg\ell_2)=\check{O}$ et, en vertu de~\cite[Section~2]{Bed23arxiv}, $\Tang(\Leg\ell_i,\Leg\mathcal{H})=\check{O}\cup\check{s}_i$. En outre, la convexité de $\mathcal{H}$ implique, d'après \cite[Lemme~3.2]{BM18Bull}, que $\Delta(\Leg\mathcal{H})=\radHd\cup\check{O},$ où $\radHd$ désigne l'ensemble des droites duales des singularités radiales de $\mathcal{H}.$ Nous en déduisons que
\begin{align*}
\Delta(\W_d)=\radHd\cup\check{O}\cup\check{s}_1\cup\check{s}_2.
\end{align*}

\noindent Soit $s\in\radH\cup\{s_1,s_2\}$. Notons $\nu-1$ l'ordre de radialité de $s$; en vertu de \cite[Proposition~3.3]{MP13}, au voisinage d'un point générique de la droite $\check{s}$ duale de $s$, nous pouvons décomposer le tissu $\Leg\mathcal{H}$ sous la forme $\Leg\mathcal{H}=\W_{\nu}\boxtimes\W_{d-\nu-2},$ où les tissus $\W_{\nu}$ et $\W_{d-\nu-2}$ sont comme dans la démonstration de la Proposition~\ref{pro:Leg-ell1-ell2-F-plat}. Ainsi~$\W_d$~admet une décomposition de la forme~(\ref{equa:W-d}).
\smallskip

\noindent Maintenant, nous allons montrer que pour tout $s\in\radH\cup\{s_1,s_2\}$, la courbure de $\W_d$ est holomorphe le long de $\check{s}.$ Nous distinguons deux cas:
\smallskip

\begin{itemize}
\item [\textbf{\textit{1.}}] Cas où $s\in\{s_1,s_2\}$. Si $\{i,j\}=\{1,2\}$ et $s=s_i$, alors $\check{s}$ est invariante par $\Leg\ell_i$. De plus, comme $s\not\in\ell_j$ et~$\check{s}\not\subset\Tang(\Leg\ell_j,\Leg\mathcal{H})$, le tissu $\W_{d-\nu-1}:=\Leg\ell_j\boxtimes\W_{d-\nu-2}$ est régulier et transverse à $\check{s},$ et nous avons~$\W_d=\Leg\ell_i\boxtimes\W_{\nu}\boxtimes\W_{d-\nu-1}$. En appliquant \cite[Théorème~1]{MP13} si $\nu=1$ et \cite[Proposition~3.9]{Bed23arxiv} si~$\nu\geq2,$ il vient que $K(\W_d)$ est holomorphe sur $\check{s}.$
\smallskip

\item [\textbf{\textit{2.}}] Cas où $s\not\in\{s_1,s_2\}$; alors $s\in\radH.$ Puisque $\radH\subset\Sing\mathcal{H}\cap L_\infty$, $s\not\in\ell_1\cup\ell_2$ et donc pour $i=1,2$, $\Leg\ell_i$ est transverse à $\check{s}$; comme en outre $\check{s}\not\subset\Tang(\Leg\ell_i,\Leg\mathcal{H})$, le tissu $\W_{d-\nu}:=\Leg\ell_1\boxtimes\Leg\ell_2\boxtimes\W_{d-\nu-2}$ est régulier et transverse à $\check{s}.$ Par conséquent, la~courbure de $\W_d=\W_{\nu}\boxtimes\W_{d-\nu}$ est holomorphe le long de $\check{s}$ par application de \cite[Proposition~2.6]{MP13}.
\end{itemize}
\smallskip

\noindent Il en résulte que la courbure $K(\W_d)$ est holomorphe sur $\pd\setminus\check{O}$, ce qui entraîne, d'après~\cite[Lemme~3.1]{BFM14} (\emph{cf.}~\cite[Lemme~3.8]{Bed23arxiv}), que $K(\W_d)\equiv0.$
\end{proof}

\begin{proof}[\sl D\'emonstration du Théorème~\ref{theoreme:leg-pre-feuilletage-homogene-convexe-plat}]
Soit $\preh=\Ccal\boxtimes\Hcal$ un pré-feuilletage homogène convexe sur $\pp.$ Si~$\Ccal$ est formée de $n\geq1$ droites $\Hcal$-invariantes distinctes $\ell_1,\ell_2,\ldots,\ell_n$, alors ou bien toutes les droites $\ell_i$ passent par l'origine~$O$, ou bien une des droites $\ell_i$ est la droite $L_\infty$ et les $(n-1)$ droites restantes passent~par~$O.$ Comme~$\Hcal$~est~convexe par hypothèse, le tissu $\Leg\Hcal$, resp. $\Leg\ell_i\boxtimes\Leg\Hcal$, resp.~$\Leg\ell_i\boxtimes\Leg\ell_j\boxtimes\Leg\Hcal$ avec~$i\neq j$, est plat en vertu de~\cite[Corollaire~3.4]{BM18Bull}, resp.~\cite[Corollaire~D]{Bed23arxiv}, resp. Propositions~\ref{pro:Leg-L-infini-ell-H-plat} et \ref{pro:Leg-ell1-ell2-ell-H-plat}. La~platitude~de~$\Leg\preh$ s'en déduit aussitôt en appliquant le Lemme~\ref{lem:K-W-W-prime} avec $\W=\Leg\ell_1\boxtimes\Leg\ell_2\boxtimes\cdots\boxtimes\Leg\ell_n$ et $\W'=\Leg\Hcal.$
\end{proof}



\begin{thebibliography}{12}
\bibitem{BFM14}
A.~Beltr\'{a}n, M.~Falla~Luza, and D.~Mar\'{\i}n.
\newblock Flat 3-webs of degree one on the projective plane.
\newblock {\em Ann. Fac. Sci. Toulouse Math. (6)}, 23(4):779--796, 2014.

\bibitem{BM18Bull}
S.~Bedrouni and D.~Mar\'{\i}n.
\newblock Tissus plats et feuilletages homog\`{e}nes sur le plan projectif complexe.
\newblock {\em Bull. Soc. Math. France}, 146(3):479--516, 2018.

\bibitem{Bed23arxiv}
S.~Bedrouni.
\newblock Pre-foliations of co-degree one on {$\mathbb{P}^{2}_{\mathbb{C}}$} with a flat Legendre transform,
\newblock {\tt arxiv:2309.12837}, 2023.

\bibitem{MP13}
D.~Mar\'{\i}n and J.~V. Pereira.
\newblock Rigid flat webs on the projective plane.
\newblock {\em Asian J. Math.} 17(1):163--191, 2013.

\bibitem{Per01}
J.~V. Pereira.
\newblock Vector fields, invariant varieties and linear systems.
\newblock {\em Ann. Inst. Fourier (Grenoble)}, 51(5):1385--1405, 2001.
\end{thebibliography}
\end{document}